\def\PSH{\mathcal{PSH}}
\def\SH{\mathcal{SH}}
\def\ddc{dd^c}
\def\Capa{\mathrm{Cap}}
\def\Vol{\mathrm{Vol}}
\numberwithin{equation}{section}
\def\1{\mathds{1}}
\title[Integrability of $(\omega,m)$-subharmonic functions ]{Integrability of $(\omega,m)$-subharmonic functions on compact Hermitian manifolds}
\author{Yuetong Fang}
\address{Université d'Angers, CNRS, LAREMA, SFR MATHSTIC, F-49000 Angers, France}
\email{yuetong.fang@univ-angers.fr}
\begin{document}

\begin{abstract}
    Let $(X,\omega)$ be a compact Hermitian manifold of dimension $n$. We show that all $(\omega,m)$-subharmonic functions are $L^p$ integrable on $X$, for any $p < \frac{n}{n-m}$. 
\end{abstract}
\maketitle

\newcommand{\A}{\mathbb{A}}
\newcommand{\B}{\mathbb{B}}
\newcommand{\C}{\mathbb{C}}
\newcommand{\G}{\mathbb{G}}

\newcommand{\Gm}{\mathbb{G}_\mathrm{m}}

\newcommand{\N}{\mathbb{N}}
\renewcommand{\P}{\mathbb{P}}
 \newcommand{\Q}{\mathbb{Q}}
 \newcommand{\R}{\mathbb{R}}
 \newcommand{\Z}{\mathbb{Z}}
\newcommand{\T}{\mathbb{T}}

\newcommand{\poll}{ł}
\newcommand{\swea}{å}

\newtheorem{theorem}{Theorem}[section]
\newtheorem{thm}[theorem]{Theorem}
\newtheorem{lemma}[theorem]{Lemma}
\newtheorem{lem}[theorem]{Lemma}
\newtheorem{prop}[theorem]{Proposition}
\newtheorem{coro}[theorem]{Corollary}
\newtheorem{corollary}[theorem]{Corollary}
\newtheorem*{Main Theorem}{Main Theorem}
\newtheorem*{Theorem B}{Theorem B}

\theoremstyle{definition}
\newtheorem{definition}[theorem]{Definition}
\newtheorem{defi}[theorem]{Definition}
\newtheorem{example}[theorem]{Example}
\newtheorem{exa}[theorem]{Example}
\newtheorem{claim}[theorem]{Claim}
\newtheorem{remark}[theorem]{Remark}
 \newtheorem*{ackn}{Acknowledgements}

\section{Introduction}
Let $\Omega \subset \C^n$ be an open set and $\omega$ a hermitian $(1,1)$-form on $\Omega$. Let $u$ be a real $\mathcal{C}^2$ functions on $\Omega$ such that the eigenvalues $\lambda = (\lambda_1, \cdots, \lambda_n)$ of the complex Hessian matrix $[u_{i \bar{j}}]_{1\le i,j \le n}$, belong to the closure of the cone
\[
\Gamma_{m} = \{ \lambda\in \R ^n : S_1(\lambda)>0, \cdots, S_m(\lambda)> 0\},
\]
where $S_k(\lambda)$ denotes the $k$-th elementary symmetric function of $\lambda$:
\[
S_k (\lambda) = \sum_{0 < j_1 < \cdots < j_k \le n} \lambda_{j_1} \lambda_{j_2} \cdots \lambda_{j_k}.
\]
Such a function is called $m$-subharmonic ($m$-sh). As shown by B\poll ocki in \cite{Blocki05}, $m$-sh functions are the right class of admissible solutions to the complex Hessian equations
\[
(d d^c u )^m \wedge \omega^{n-m} = f \omega^n,
\]
which are the generalization of Poisson equations ($m=1$) and Monge--Amp\`ere equations ($m=n$).

While all plurisubharmonic functions are locally $L^p$-integrable for all $p \ge 1$, this is not necessarily the case for $m$-sh functions. The fundamental solution to the $m$-Hessian equation when $\omega=\ddc|z|^2$ is given by $-\frac{1}{|z|^{2n/m-2}}$, which is $L^p$-integrable only when $p< \frac{nm}{n-m}$. B\l ocki conjectured that all $m$-sh functions are locally $L^p$-integrable for $p< \frac{nm}{n-m} $. This conjecture is partially confirmed in \cite{DK14} \cite{AC20}, where the former developed a useful tool to study the integrability of $m$-sh functions, known as volume-capacity inequality:
\[
\Vol (K) \le C_{p} \Capa^{p}_m(K,\Omega),
\]
with $K$ being a compact subset of $\Omega$, $\Capa_m(K,\Omega)$ denote the Hessian capacity, and $p < \frac{n}{n-m}$. 

On compact hermitian manifolds $(X,\omega)$, we consider the $m$-Hessian operator
\[
H_m(u) = (\omega+d d^c u)^m \wedge \omega^{n-m}.
\]
A $\mathcal{C}^2$ function is called $(\omega,m)$-subharmonic with respect to $\omega$ if $H_k(u) \ge 0$ for all $k=1, \cdots,m$. The study of $m$-Hessian equations on compact manifolds (see for example \cite{Sze18}, \cite{Zhang_2017}, \cite{Ko_odziej_2016}, \cite{guedj2023degeneratecomplexhessianequations}, \cite{kolodziej2023complexhessianmeasuresrespect}) motivate the development of potential theory for $(\omega,m)$-sh functions on hermitian manifolds. The main purpose of this article is to extend Dinew--Ko\l odziej's integrability result to the Hermitian manifolds. We now present the precise statement.
\begin{Main Theorem} \label{Main Theorem}
    Let $(X,\omega)$ be a compact complex hermitian manifold of dimension $n$, equipped with a hermitian form $\omega$. Fix an integer $1\le m \le n$.  For any $p < \frac{n}{n-m}$, we have $ \SH_{m}(X,\omega) \subset L^p(X, \omega^n)$.
\end{Main Theorem}
When $(X,\omega)$ is a K\"ahler manifold, the theorem is proved in \cite{LN15}, using the volume-capacity inequality provided by the work of Dinew and Ko{\l}odziej \cite{DK14}, and the Chern--Levine--Nirenberg(CLN) inequality. The proof of the latter inequality goes through several integration by parts which are delicate in the hermitian setting due to the appearance of torsion terms. In our setting when $m<n$, there is also a lack of positivity since $(\omega,m)$-subharmonic functions are in general not $\omega$-plurisubharmonic. In this short note, we observe that in the volume capacity estimate of Dinew and Ko{\l}odziej the candidates defining the capacity can be taken to be $\omega$-plurisubharmonic. This observation simplifies several potential estimates that we carry out in Section~\ref{Sec:int}, where we prove the main result. In  Section~\ref{Sec:Preliminaries}, we focus on reviewing essential concepts and results required for the proof. 

\medskip

\noindent {\bf Acknowledgment.} The author thanks her supervisor, Hoang-Chinh Lu, for suggesting the problem, valuable discussions, and useful comments that improved the presentation. This research is part of a PhD program funded by the PhD scholarship CSC-202308070110. This work is partially supported by the projects Centre Henri Lebesgue ANR-11-LABX-0020-01 and PARAPLUI ANR-20-CE40-0019.

\section{Preliminaries} \label{Sec:Preliminaries}
Throughout this paper, we denote $(X,\omega)$ as a compact Hermitian manifold of complex dimension $n\in \mathbb{N}^{*}$, equipped with a Hermitian metric $\omega$. We use the differential operators $d = \partial + \bar{\partial}$, and $d^c = i (\bar{\partial}- \partial)$, so that $\ddc = 2i \partial \bar{\partial}$. 

We now recall the notions of $(\omega,m)$-subharmonic functions and gather some results related to $\omega$-plurisubharmonic functions and $(\omega,m)$-subharmonic functions. 

\subsection{\texorpdfstring{$(\omega,m)$}{}-subharmonic functions}
Fix an integer $1 \le m \le n$. Fix $\Omega $ an open set in $\C^n$. We follow the definition in \cite{GuNguyen2018}.

\begin{definition}
A real $(1,1)$-form $\alpha$ on $X$ is called $m$-positive with respect to $\omega$ if at all points in $X$,
\[
\alpha^{k} \wedge \omega^{n-k} \ge 0, \forall k =1, \cdots, m.
\]
\end{definition}

\begin{definition}
Given a hermitian metric $\alpha$ on $\Omega$, a $\mathcal{C}^2(\Omega)$ function $u : \Omega \rightarrow  \R$ is called 
 harmonic with respect to $\alpha$ if $\ddc u \wedge \alpha^{n-1}=0$ at all points in $\Omega$.
\end{definition}

\begin{definition}
A function $u:\Omega \rightarrow \{ -\infty\} \cup \R$ is subharmonic with respect to $\omega$ if:
\begin{enumerate}[label=(\alph*)]
\item $u$ is upper semicontinuous and $u \in L^1_{loc}(\Omega)$;
\item for every relatively compact open $D \Subset \Omega$ and every function $h\in \mathcal{C}^0(D)$ that is harmonic with respect to $\omega$ on $D$ the following implication holds
\[
u \le h \text{ on }\partial D \implies u \le h \text{ in }D.
\]
\end{enumerate}
\end{definition}

\begin{definition}
A function $\varphi : \Omega \rightarrow  \{ -\infty\} \cup \R$ is quasi-subharmonic with respect to $\alpha$ if locally $\varphi = u + \rho$, where $u$ is subharmonic with respect to $\alpha $ and $\rho$ is smooth.

A function $\varphi$ is $\omega$-subharmonic with respect to $\alpha$ if $\varphi$ is quasi-subharmonic with respect to $\alpha$ and $(\omega + \ddc \varphi) \wedge \alpha^{n-1}\ge 0$ in sense of distributions.
\end{definition}
The positive cone $\Gamma_m(\alpha)$ associated with the metric $\alpha$ is defined by 
\[
\left\{  \gamma \text{ real }(1,1) \text{-form}: \gamma^k\wedge \alpha^{n-k}>0, k=1,\cdots,m \right\}.
\]
It follows from Gårding's inequality\cite{gar59} that if $\gamma_0, \gamma_1, \cdots, \gamma_{m-1}\in \Gamma_{m}(\alpha)$, then 
\[
\gamma_0 \wedge\gamma_1\wedge \cdots \wedge\gamma_{m-1} \wedge\alpha^{n-m} >0.
\]
One can write $\widetilde{\alpha}= \gamma_1\wedge \cdots \wedge\gamma_{m-1} \wedge\alpha$, and it is a strictly positive $(n-1,n-1)$-form on $\Omega$.
\begin{definition}
A function $\varphi: \Omega \rightarrow [ -\infty, + \infty)$ is called $(\omega,m)$-subharmonic with respect to $\alpha$ if $\varphi$ is $\omega$-subharmonic with respect to $\widetilde{\alpha}$ in $\Omega$ for all 
$\widetilde{\alpha}$ of the form $\widetilde{\alpha}^{n-1}= \gamma_1\wedge \cdots \wedge\gamma_{m-1} \wedge\alpha$, where $\gamma_1, \cdots,\gamma_{m-1} \in \Gamma_m(\alpha)$.

A function $u : X \rightarrow \mathbb{R} \cup \{- \infty \} $ is called $(\omega,m)$-subharmonic on $X$ if $u$ is $(\omega,m)$-subharmonic on each local chart $U$ of $X$.
\end{definition}
The set of all locally integrable functions on $U$ which are $(\omega,m)$-subharmonic with respect to $\alpha$ in $U$ is denoted by $\SH_{\alpha,m}(U,\omega)$. We remark that $\omega$ and $\alpha$ are not necessarily the same.

However, in this paper, we focus on the set $\SH_{\omega,m}(U,\omega)$. To simplify the notations, we denote $\SH_m(U,\omega)$ the set of all $(\omega,m)$-subharmonic functions with respect to $\omega$ in $U$. The set of all $\omega$-plurisubharmonic functions on $U$ is denoted by $\PSH(U, \omega)=\SH_n(U,\omega)$.

We denote $\omega_{u} \coloneqq \omega + \ddc u$.

\begin{remark}
  By Gårding's inequality\cite{gar59}, if $u \in \mathcal{C}^2(X)$, then $u$ is $(\omega,m)$-subharmonic with respect to $\omega$ on $X$ if and only if the associated form $\omega _u$ belongs to the closure of $\Gamma_m(\omega) $.
\end{remark}

The integration by parts formula is valid for $\mathcal{C}^2 (X)$ functions (by Stokes' theorem). We can see that it still holds for bounded $(\omega,m)$-subharmonic functions by \cite[Proposition 3.20]{kolodziej2023complexhessianmeasuresrespect}. We will need the following version. 
\begin{prop}[Integration by parts] \label{For:IPP}
    Let $\varphi, \psi \in \SH_m(X,\omega) \cap C^2(X)$. Let $T$ be a smooth $(n-1,n-1)$-form. Then,
    \[
    \int_X \varphi \ddc \psi \wedge T =\int_X \psi \ddc \varphi \wedge T + 2 \int_X \psi d \varphi \wedge d^cT + \int_X \psi \varphi  \ddc T.
    \]
\end{prop}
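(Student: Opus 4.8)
The plan is to derive everything from Stokes' theorem; since $\varphi,\psi\in C^2(X)$ every form appearing below is genuinely smooth and no regularization is needed. I would use two facts about the compact boundaryless manifold $X$: $\int_X d\beta=0$ for every smooth $(2n-1)$-form $\beta$, and likewise $\int_X d^c\beta=0$. The second holds because, decomposing $\beta=\beta^{n,n-1}+\beta^{n-1,n}$ into bidegrees, one has $d^c\beta=d\bigl(i(\beta^{n,n-1}-\beta^{n-1,n})\bigr)$ — the remaining would-be pieces of $d^c\beta$ have bidegree $(n+1,n-1)$ or $(n-1,n+1)$ and vanish on an $n$-dimensional manifold — so $d^c\beta$ is exact. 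I will also use $d^cd=-\ddc$ and the Leibniz rule for the degree-one antiderivations $d$ and $d^c$.

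First I would apply $\int_X d^c(\cdot)=0$ to the $(2n-1)$-form $\varphi\,d\psi\wedge T$. Expanding by Leibniz and using $d^cd\psi=-\ddc\psi$ gives
\[
\int_X \varphi\,\ddc\psi\wedge T \;=\; \int_X d^c\varphi\wedge d\psi\wedge T \;-\; \int_X \varphi\,d\psi\wedge d^cT ,
\]
and, symmetrically, the same identity with $\varphi$ and $\psi$ interchanged. Subtracting the two, the first terms on the right combine into $\int_X(d^c\varphi\wedge d\psi-d^c\psi\wedge d\varphi)\wedge T$. Here is the one observation that closes the computation: $d^c\varphi\wedge d\psi-d^c\psi\wedge d\varphi=d\varphi\wedge d^c\psi-d\psi\wedge d^c\varphi$ is a form of pure bidegree $(2,0)+(0,2)$ — its $(1,1)$-parts agree, both equal to $i(\partial\varphi\wedge\bar\partial\psi-\bar\partial\varphi\wedge\partial\psi)$ — and is therefore annihilated by wedging with the $(n-1,n-1)$-form $T$ on the $n$-dimensional $X$. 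Hence that term drops and I am left with
\[
\int_X \varphi\,\ddc\psi\wedge T - \int_X \psi\,\ddc\varphi\wedge T \;=\; \int_X \psi\,d\varphi\wedge d^cT - \int_X \varphi\,d\psi\wedge d^cT .
\]

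Finally I would apply $\int_X d(\cdot)=0$ to the $(2n-1)$-form $\psi\varphi\,d^cT$; Leibniz and $dd^cT=\ddc T$ give
\[
\int_X \psi\,d\varphi\wedge d^cT + \int_X \varphi\,d\psi\wedge d^cT + \int_X \psi\varphi\,\ddc T \;=\; 0 ,
\]
so $-\int_X\varphi\,d\psi\wedge d^cT=\int_X\psi\,d\varphi\wedge d^cT+\int_X\psi\varphi\,\ddc T$. Substituting this into the previous display yields exactly $\int_X\varphi\,\ddc\psi\wedge T=\int_X\psi\,\ddc\varphi\wedge T+2\int_X\psi\,d\varphi\wedge d^cT+\int_X\psi\varphi\,\ddc T$, which is the asserted identity.

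I do not expect a real obstacle: the $(\omega,m)$-subharmonicity of $\varphi,\psi$ plays no role at the level of $C^2$ potentials (it matters only later, when one wants the formula for bounded $(\omega,m)$-subharmonic functions, via approximation as in \cite[Proposition 3.20]{kolodziej2023complexhessianmeasuresrespect}). The only point demanding care is the bookkeeping of signs produced by the torsion terms $dT$, $d^cT$, $\ddc T$, none of which vanishes in the Hermitian setting; carrying the first two integrations by parts with $d^c$ rather than $d$ is what keeps these contributions organized so that the mixed-gradient terms cancel cleanly.
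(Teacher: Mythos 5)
Your proof is correct and follows essentially the same route as the paper's: both arguments are pure Stokes-theorem computations whose only real content is a bidegree cancellation (your observation that $d^c\varphi\wedge d\psi-d^c\psi\wedge d\varphi$ has no $(1,1)$-component is equivalent to the paper's identity $\int_X\varphi\, d^c\psi\wedge dT=-\int_X\varphi\, d\psi\wedge d^cT$). The paper merely organizes the computation differently, writing $\int_X\varphi\, \ddc\psi\wedge T=\int_X\psi\, \ddc(\varphi T)$ and expanding by Leibniz, rather than symmetrizing three exact-form identities as you do.
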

\begin{proof}
Observe that $\int_X\varphi\partial \psi \wedge \partial T= \int_X \varphi \bar \partial \psi \wedge \bar{\partial} T= 0 $ by bidegree reason. Hence, \[
\int_X\varphi d^c \psi \wedge dT= -\int_X\varphi d\psi \wedge d^cT.
\]
It thus follows from Stokes' theorem that
\[
    \begin{aligned}
    \int_X \varphi \ddc \psi \wedge T =& \int_X \psi \ddc ( \varphi  T)\\
    =& \int_X \psi d(d^c \varphi \wedge T + \varphi d^cT)\\
    =& \int_X \psi \left(dd^c \varphi \wedge T - d^c \varphi \wedge dT+ d\varphi \wedge d^cT +\varphi \ddc T\right)\\
    =&\int_X \psi \ddc \varphi \wedge T + 2 \int_X \psi d \varphi \wedge d^c T + \int_X \psi \varphi  \ddc T.
    \end{aligned}
    \]
\end{proof}

 We state an important result for $(\omega,m)$-subharmonic functions (see \cite{Ko_odziej_2016}, Lemma 2.3).
\begin{lem} \label{lem:inepc}
    Let $u \in \SH_m(\Omega,\omega)\cap \mathcal{C}^2(\Omega)$ and $T$ be a smooth $(n-k,n-k)$-form with $1\le k \le m-1$. Then,
    \[
    |\omega_u^k \wedge T /\omega^n| \le C_{n,k,\|T\|} \omega_u^k \wedge \omega^{n-k} /\omega^n,
    \]
    where $C_{n,k,\|T\|}$ is a uniform constant depending only on $n,k$ and $\sup_{\Omega}\|T\|$.
\end{lem}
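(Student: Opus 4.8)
The goal is a pointwise comparison of a "mixed" wedge $\omega_u^k\wedge T$ against the "canonical" one $\omega_u^k\wedge\omega^{n-k}$, with a constant depending only on $n$, $k$, and $\sup_\Omega\|T\|$. The plan is to work at a fixed point $x\in\Omega$ and diagonalize simultaneously: choose complex coordinates so that $\omega(x)=\sqrt{-1}\sum_j dz_j\wedge d\bar z_j$ (after a linear change) and $\omega_u(x)=\sqrt{-1}\sum_j\lambda_j\, dz_j\wedge d\bar z_j$, where the $\lambda_j\ge 0$ components... wait, not quite — the eigenvalue vector $\lambda=(\lambda_1,\dots,\lambda_n)$ lies in $\overline{\Gamma_m}$, so $S_1(\lambda),\dots,S_m(\lambda)\ge 0$ but individual $\lambda_j$ may be negative. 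This is exactly the loss-of-positivity issue flagged in the introduction, and it is where care is needed.

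With the simultaneous diagonalization in hand, expand $\omega_u^k\wedge\omega^{n-k}/\omega^n = \binom{n-k}{?}^{-1}\cdots$ — more precisely, $\omega_u^k\wedge\omega^{n-k}/\omega^n$ is (up to a combinatorial constant) $S_k(\lambda)$, the $k$-th elementary symmetric polynomial in the $\lambda_j$. On the other side, writing $T=\sum_{|I|=|J|=n-k} T_{I\bar J}\, dz_I\wedge d\bar z_J + (\text{non-}(p,p)\text{ pieces that wedge to zero})$, only the "diagonal" coefficients $T_{I\bar I}$ survive the wedge with $\omega_u^k$, and one gets $\omega_u^k\wedge T/\omega^n = c_{n,k}\sum_{|I^c|=k}\bigl(\prod_{j\in I^c}\lambda_j\bigr) T_{I\bar I}$ up to sign conventions. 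Thus $|\omega_u^k\wedge T/\omega^n|\le C_{n,k}\sup\|T\|\cdot\sum_{|A|=k}\bigl|\prod_{j\in A}\lambda_j\bigr|$, so everything reduces to the purely algebraic inequality
\[
\sum_{|A|=k}\Bigl|\prod_{j\in A}\lambda_j\Bigr|\ \le\ C_{n,k}\, S_k(\lambda)\qquad\text{for all }\lambda\in\overline{\Gamma_m},\ 1\le k\le m-1.
\]

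The main obstacle is precisely this algebraic estimate, since $S_k(\lambda)$ can be small while individual products $\prod_{j\in A}\lambda_j$ are not obviously controlled when some $\lambda_j<0$. The standard route (this is the content of the cited Lemma 2.3 of \cite{Ko_odziej_2016} and of B\l ocki's work) is to use the Gårding / Maclaurin-type structure of the cone: for $\lambda\in\overline\Gamma_m$ and $1\le k\le m-1$, one has $S_k(\lambda)>0$ unless $\lambda$ is on the boundary, and more importantly the Newton–Maclaurin inequalities together with the sign constraints force $|\lambda_{j_1}\cdots\lambda_{j_k}|\lesssim S_k(\lambda)$ — concretely, if one orders $\lambda_1\ge\cdots\ge\lambda_n$ then membership in $\overline\Gamma_m$ with $k\le m-1$ implies $\lambda_k\ge 0$ and $|\lambda_n|\le (n-1)\lambda_1$-type bounds, from which each $k$-fold product is dominated by $S_k$. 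I would invoke this directly rather than reprove it, quoting \cite{Ko_odziej_2016} (or \cite{Blocki05}, \cite{DK14}) for the pointwise inequality on $\overline\Gamma_m$.

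Finally I would assemble: the constant from the algebraic step depends only on $n,k$; the passage from $T$ to its diagonal coefficients contributes a factor bounded by $C_{n,k}\sup_\Omega\|T\|$ (with $\|\cdot\|$ the norm induced by $\omega$); and since the comparison is pointwise in $x$, taking suprema over $\Omega$ gives the stated uniform constant $C_{n,k,\|T\|}$. One bookkeeping point to get right is that the estimate is claimed for $1\le k\le m-1$, and it is exactly this restriction (not $k\le m$) that makes $\lambda_k\ge 0$ available and the argument go through; I would make sure the diagonalization and the quoted algebraic lemma are applied with that range.
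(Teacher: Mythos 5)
The paper does not actually prove this lemma---it is imported verbatim from \cite{Ko_odziej_2016}, Lemma 2.3---and your proposal is a faithful sketch of the standard argument behind that citation: pointwise simultaneous diagonalization of $\omega$ and $\omega_u$ in an $\omega$-orthonormal frame, reduction of the statement to the purely algebraic inequality $\sum_{|A|=k}\bigl|\prod_{j\in A}\lambda_j\bigr|\le C_{n,k}\,S_k(\lambda)$ for $\lambda\in\overline{\Gamma_m}$, together with the correct observation that the restriction $k\le m-1$ is precisely what makes that inequality valid (it fails for $k=m$, e.g.\ $\lambda=(M,1-M)$ with $n=2$, $m=k=1$). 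Since you, like the paper, ultimately defer the algebraic inequality to the cited literature rather than reproving it, the two treatments are essentially the same, and I see no gap.
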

We remark that the assumption $k<m$ is crucial.  

We end this subsection with a $L^1$-compactness result (see \cite[lemma 3.3]{Ko_odziej_2016}).

\begin{lem} \label{lem:cap}
    Let $u\in \SH_m(X,\omega)$ be normalized by $\sup_X u =0$. Then there exists a uniform constant $A>0 $ depending only on $X, \omega$ such that
    \[
    \int_X |u|\omega^n \le A.
    \]
\end{lem}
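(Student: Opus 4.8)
\emph{Proof proposal.} I would prove this just as one proves the classical $L^1$-compactness of normalized quasi-plurisubharmonic functions on a compact manifold; the Hermitian (possibly non-Kähler) nature of $\omega$ plays no role, since only the local linear elliptic theory enters. The plan is to argue by contradiction using a Hartogs-type compactness lemma. First I would observe that, taking $\gamma_1 = \dots = \gamma_{m-1} = \omega \in \Gamma_m(\omega)$ in the definition of $(\omega,m)$-subharmonicity, every $u \in \SH_m(X,\omega)$ satisfies $\omega_u \wedge \omega^{n-1} \ge 0$ in the sense of currents; that is, in a coordinate chart $\ddc u \wedge \omega^{n-1} \ge -\omega^n$ as currents. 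Let $L$ denote the second-order linear operator $v \mapsto \ddc v \wedge \omega^{n-1}/\omega^n$; since $\omega$ is a smooth Hermitian metric, in each chart $L$ has smooth coefficients and is uniformly elliptic on every relatively compact subball $B$. Choosing a smooth $g$ on $B$ with $\ddc g\wedge\omega^{n-1}\ge\omega^n$ (e.g.\ $g = c|z|^2$ for $c$ large), the function $u + g$ is then an $L$-subsolution on $B$, hence has an $L$-subharmonic representative to which classical potential theory applies. In particular the Hartogs lemma holds for $L$: a locally uniformly bounded-above sequence of $L$-subsolutions on a connected open set either tends to $-\infty$ locally uniformly, or has a subsequence converging in $L^1_{\mathrm{loc}}$.

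Next I would fix once and for all a finite cover $X = \bigcup_{j=1}^N B_j$ by coordinate balls, with concentric subballs $B_j' \Subset B_j$ still covering $X$ and arranged so that any two of the $B_j'$ are joined by a chain of $B_k'$'s with consecutive terms overlapping in a set of positive measure (possible since the $B_j'$ form a finite open cover of the connected manifold $X$), together with smooth $g_j$ on $B_j$ as above. Arguing by contradiction, suppose there were $u_i \in \SH_m(X,\omega)$ with $\sup_X u_i = 0$ but $\int_X|u_i|\,\omega^n \to +\infty$. By upper semicontinuity on the compact $X$ each $u_i$ attains the value $0$, say at $x_i$; after passing to a subsequence, $x_i \to x_\infty$ with $x_\infty \in B_{j_0}'$ for some $j_0$, so that $\sup_{B_{j_0}'} u_i = 0$ for all large $i$. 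On $B_{j_0}$ the $L$-subsolutions $u_i + g_{j_0}$ are uniformly bounded above (as $u_i \le 0$) and do not tend to $-\infty$ locally uniformly, so the Hartogs lemma gives a subsequence of $(u_i)$ converging in $L^1_{\mathrm{loc}}(B_{j_0})$. Propagating along a chain of the $B_k'$: on the next ball $B_{j_1}$ the sequence is again uniformly bounded above and cannot tend to $-\infty$ locally uniformly, for that would force $\int_{B_{j_0}'\cap B_{j_1}'}|u_i|\to\infty$, contradicting the $L^1$-convergence already obtained on $B_{j_0}'\cap B_{j_1}'$; hence a further subsequence converges in $L^1_{\mathrm{loc}}(B_{j_1})$, and so on. After at most $N$ steps one extracts a subsequence of $(u_i)$ converging in $L^1(X,\omega^n)$ to some $u_\infty \in L^1(X,\omega^n)$, whence $\int_X|u_i|\,\omega^n \to \int_X|u_\infty|\,\omega^n < \infty$ — contradicting the choice of $(u_i)$. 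This gives the claimed uniform constant $A$, depending only on $(X,\omega)$ (in fact not on $m$).

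The only slightly delicate point is invoking the Hartogs lemma for $L$ in place of the ordinary Laplacian, but this is harmless: $L$ is a fixed uniformly elliptic operator with smooth coefficients on each fixed chart, so the Riesz representation and weak Harnack estimates underlying the lemma hold with constants depending only on $\omega$ and the chart. Once the finite cover is fixed, every constant in the argument is uniform in $u$, and the rest is routine connectedness/chaining bookkeeping. (Alternatively, one may simply invoke \cite[Lemma 3.3]{Ko_odziej_2016}.)
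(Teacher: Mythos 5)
Your proposal is correct. The paper gives no proof of this lemma --- it simply cites \cite[Lemma 3.3]{Ko_odziej_2016} --- and your argument is, in substance, the standard compactness proof underlying that citation. The key reduction is the right one: taking $\gamma_1=\cdots=\gamma_{m-1}=\omega\in\Gamma_m(\omega)$ in the definition shows every $u\in \SH_m(X,\omega)$ satisfies $(\omega+\ddc u)\wedge\omega^{n-1}\ge 0$, so the statement is really a fact about subsolutions of the single linear, uniformly elliptic operator $Lv=\ddc v\wedge\omega^{n-1}/\omega^n$ with smooth coefficients, independent of $m$; and the Hartogs-type alternative (locally uniform divergence to $-\infty$ versus $L^1_{\mathrm{loc}}$-compactness) for families of $L$-subsolutions bounded above is indeed classical for such operators, via the sub-mean-value property with respect to $L$-harmonic measure or the Riesz decomposition with the Green function of $L$ on a ball. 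The chaining over a finite cover of the connected compact manifold is routine, and upper semicontinuity (needed so that $\sup_X u=0$ is attained) is built into the definition through the local decomposition $u=v+\rho$ with $v$ subharmonic. The only difference worth noting is stylistic: your contradiction argument yields a non-effective constant $A$, whereas the route more commonly taken in the Hermitian literature derives the bound directly from a Cherrier-type mean-value inequality $\sup_X u\le V^{-1}\int_X u\,\omega^n+C(X,\omega)$ for functions with $\Delta_\omega u\ge -C$, which gives an explicit constant; both are equally valid for the purposes of this paper.
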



\subsection{The Cauchy--Schwarz inequality}
Let $h$ be a smooth real-valued function and $u,v$ be Borel functions. Let $T$ be a positive current of bidegree $(n-2,n-2)$. We state the following Cauchy\textendash Schwarz inequality (see \cite[Proposition 1.4]{NG16}, \cite[Lemma 2.3]{kolodziej2023complexhessianmeasuresrespect}).
\begin{lem} \label{lem:cs}
    There exists a uniform constant $C$ depending on $\omega$ such that
\[
\left|  \int_X uv dh \wedge d^c \omega \wedge T \right|^2 \le C \int_X |u|^2 d h \wedge d^c h \wedge \omega \wedge T \int_X |v|^2  \omega^2 \wedge T.
\]    
\end{lem}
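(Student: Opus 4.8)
The plan is to reduce the global inequality to a \emph{pointwise} Cauchy–Schwarz between three measures obtained by wedging smooth forms against the positive current $T$, and then to combine it with the ordinary scalar Cauchy–Schwarz under the integral sign. Set $\nu := dh\wedge d^c\omega\wedge T$, which is a real signed measure since $dh$, $d^c\omega$ are real smooth forms and $T$ is a real positive current, and set $\mu_1 := dh\wedge d^c h\wedge\omega\wedge T\ge 0$ and $\mu_2 := \omega^2\wedge T\ge 0$. The lemma follows at once once one establishes the pointwise bound $|\nu|\le C^{1/2}\,\mu_1^{1/2}\mu_2^{1/2}$ as measures (i.e. for the Radon–Nikodym densities with respect to $\mu_1+\mu_2$): applying the scalar Cauchy–Schwarz gives $\left|\int_X uv\,d\nu\right|\le C^{1/2}\int_X |u|\mu_1^{1/2}\,|v|\mu_2^{1/2}\le C^{1/2}\left(\int_X|u|^2\,d\mu_1\right)^{1/2}\left(\int_X|v|^2\,d\mu_2\right)^{1/2}$, which is exactly the asserted inequality. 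The fact that $u,v$ are merely Borel causes no trouble, as they enter only through this last step.

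First I would discard the parts that vanish for bidegree reasons: since $T$ has bidegree $(n-2,n-2)$, only the $(2,2)$-component of $dh\wedge d^c\omega$ survives, and a direct computation using $d^c\omega=i(\bar\partial\omega-\partial\omega)$ gives this component as $2\,\mathrm{Re}\,(i\,\partial h\wedge\bar\partial\omega)$. Hence $\nu=2\,\mathrm{Re}\,\tilde\nu$ with $\tilde\nu:=i\,\partial h\wedge\bar\partial\omega\wedge T$, and it suffices to bound $|\tilde\nu|$. I would then work in a local $\omega$-orthonormal coframe $e_1,\dots,e_n$, so that $\omega=i\sum_j e_j\wedge\bar e_j$. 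The positivity of $T$ means precisely that the pairing $Q(\Theta,\Theta'):=\Theta\wedge\overline{\Theta'}\wedge T$ on $(2,0)$-forms $\Theta=\sum_{p<q}\Theta_{pq}\,e_p\wedge e_q$ is a positive semidefinite Hermitian form of size $\binom{n}{2}$ whose entries are complex measures. Expanding $\omega$ in the coframe rewrites the target measures through $Q$: one finds $\mu_1=\sum_t Q(\partial h\wedge e_t,\,\partial h\wedge e_t)$ and $\mu_2=2\sum_{t<t'}Q(e_t\wedge e_{t'},\,e_t\wedge e_{t'})=2\,\mathrm{Tr}\,Q$, while expanding $\bar\partial\omega=\sum_{q<r}\alpha_{qr}\wedge\bar e_q\wedge\bar e_r$ with $(1,0)$-forms $\alpha_{qr}=\sum_p a_{p\bar q\bar r}\,e_p$ yields $\tilde\nu=i\sum_{q<r}Q(\partial h\wedge\alpha_{qr},\,e_q\wedge e_r)$. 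Here the coefficients $a_{p\bar q\bar r}$ are the components of $\bar\partial\omega$ in an orthonormal coframe, that is, the torsion of $\omega$, so they are bounded by a constant $C_\omega$ depending only on $\omega$.

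The pointwise estimate is now just the Cauchy–Schwarz inequality for the positive semidefinite form $Q$. Writing $\|\cdot\|_Q:=Q(\cdot,\cdot)^{1/2}$, I get $|\tilde\nu|\le\sum_{q<r}\|\partial h\wedge\alpha_{qr}\|_Q\,\|e_q\wedge e_r\|_Q$. Since $\partial h\wedge\alpha_{qr}=\sum_p a_{p\bar q\bar r}\,\partial h\wedge e_p$ with $|a_{p\bar q\bar r}|\le C_\omega$, the triangle inequality for the seminorm $\|\cdot\|_Q$ gives $\|\partial h\wedge\alpha_{qr}\|_Q\le C_\omega\sum_p\|\partial h\wedge e_p\|_Q\le C_\omega\sqrt{n}\,\mu_1^{1/2}$, while $\|e_q\wedge e_r\|_Q\le(\mu_2/2)^{1/2}\le\mu_2^{1/2}$. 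Summing the $\binom{n}{2}$ terms and absorbing the dimensional constants into $C=C(\omega)$ yields $|\tilde\nu|\le C^{1/2}\mu_1^{1/2}\mu_2^{1/2}$, hence $|\nu|\le 2|\tilde\nu|\le(4C)^{1/2}\mu_1^{1/2}\mu_2^{1/2}$, which is the pointwise bound needed above.

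The one point demanding care, and the main obstacle, is giving rigorous meaning to $Q$ as a positive semidefinite Hermitian \emph{matrix of measures} when $T$ is only a positive current: I would justify the pointwise Cauchy–Schwarz by passing to Radon–Nikodym densities with respect to the trace measure $\mathrm{Tr}\,Q$ (equivalently $\mu_2$), against which $Q$ has an honest positive semidefinite Hermitian density almost everywhere, so that the finite-dimensional algebraic Cauchy–Schwarz applies at almost every point. Everything else is bookkeeping with bidegrees, the identification of $\mu_1,\mu_2,\tilde\nu$ through $Q$, and the elementary inequality $\bigl(\sum_p|a_p|x_p\bigr)^2\le\bigl(\sum_p|a_p|^2\bigr)\bigl(\sum_p x_p^2\bigr)$; the constant $C$ depends on $\omega$ solely through the uniform torsion bound $|a_{p\bar q\bar r}|\le C_\omega$ and the dimension $n$.
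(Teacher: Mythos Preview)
The paper does not give its own proof of this lemma; it simply cites \cite[Proposition 1.4]{NG16} and \cite[Lemma 2.3]{kolodziej2023complexhessianmeasuresrespect}. Your argument is correct and is essentially the standard one used in those references: isolate the $(2,2)$-component of $dh\wedge d^c\omega$ by bidegree, work in a local $\omega$-orthonormal coframe so that the positive current $T$ induces a positive semidefinite Hermitian form $Q$ on $\Lambda^{2,0}$, express $\mu_1,\mu_2$ and $\tilde\nu$ through $Q$, and then apply the pointwise Cauchy--Schwarz for $Q$ (justified via Radon--Nikodym densities against the trace measure) followed by the scalar Cauchy--Schwarz under the integral. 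The identifications $\mu_1$, $\mu_2$, $\tilde\nu$ in terms of $Q$ are right up to harmless factors of $2$ that are absorbed into the constant, and the torsion bound $|a_{p\bar q\bar r}|\le C_\omega$ is exactly where the dependence on $\omega$ enters.
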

Although the Cauchy–Schwarz inequality typically requires $T$ to be positive, it still holds for $T = \gamma^{m-1}\wedge \omega^{n-m-1}$, which is not necessarily positive, with $\gamma$ being $m$-positive with respect to $\omega$(see \cite[lemma 2.4]{kolodziej2023complexhessianmeasuresrespect}).
\subsection{Capacity}
Let $E \subset X$ be a Borel subset.
\begin{definition}
     The $(\omega, m)$-capacity of $E$ is defined by 
    \[
    \Capa_{\omega,m}(E) \coloneqq \sup \left\{ \int_E \omega_{\varphi}^m \wedge \omega^{n-m} : \varphi \in \SH_m(X, \omega), 0\le \varphi \le 1 \right\}.
    \]
\end{definition}
It's clear that the $(\omega, m)$-capacity is well defined for $\varphi \in \mathcal{C}^2(X)$. Thanks to \cite[theorem 3.3]{kolodziej2023complexhessianmeasuresrespect}, the Hessian operator for bounded $(\omega,m)$-subharmonic function is well defined, and so is the capacity. 

Since we will need an upper bound for the capacity of sublevel sets $\{ \psi <-t\}$, controlling the terms $d \varphi \wedge d^c \omega^{p} $ when $\varphi \in \SH_m(X,\omega)$ (which arises from integration by parts) can be difficult. We therefore introduce the following version of the Hessian capacity.
\begin{definition}
  We define: 
    \[
    \widetilde{\Capa}_{\omega,m}(E) \coloneqq \sup \left\{ \int_E \omega_{\varphi}^m \wedge \omega^{n-m} : \varphi \in \PSH(X, \omega), 0\le \varphi \le 1 \right\}.
    \]
\end{definition}
 We observe that $ \widetilde{\Capa}_{\omega,m}(E) \le \Capa_{\omega,m}(E)$, since $\PSH(X,\omega)= \SH_{n}(X,\omega) \subset \SH_{m}(X,\omega)$.

\section{Integrability of \texorpdfstring{$(\omega,m)$-}{}subharmonic functions on Hermitian manifolds} \label{Sec:int}
 \begin{prop}[Volume-Capacity estimate]  \label{prop:V-Cest}
For $ 1<\tau < \frac{n}{n-m}$, there exists a constant $C_{\tau}$ such that for each Borel subset $K$ of $X$,
\[
V(K) \le C_{\tau} \widetilde{\Capa}_{\omega,m}^{\tau}(K),
\]
where $V(K)= \int_K \omega^n$.
 \end{prop}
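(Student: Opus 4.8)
The plan is to reduce the compact statement to its local counterpart, the Dinew--Kołodziej volume-capacity inequality in $\C^n$, and then patch via a partition of unity. First I would fix a finite cover of $X$ by coordinate balls $B_j \Subset B_j' \subset \C^n$, $j=1,\dots,N$, such that on each $B_j'$ the Hermitian form $\omega$ is comparable to the Euclidean form $\beta := dd^c|z|^2$: there is $\Lambda>1$ with $\Lambda^{-1}\beta \le \omega \le \Lambda\beta$ on $B_j'$. Shrinking if necessary, I may also arrange that on $B_j'$ one can write $\omega = dd^c g_j + (\text{error})$, or more simply just keep $\omega$ and compare. Given a Borel set $K\subset X$, set $K_j := K\cap B_j$, so $V(K) \le \sum_j V(K_j)$, and it suffices to bound each $V(K_j)$ by $C\,\widetilde{\Capa}_{\omega,m}(K)^\tau$. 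For a fixed $j$, I would like to use a candidate $\varphi$ for the \emph{local} $m$-Hessian capacity $\Capa_m(K_j, B_j')$ of Dinew--Kołodziej --- that is, a function in $\SH_m(B_j',\beta_{\mathrm{Eucl}})$ (equivalently an $m$-sh function in the flat sense on $B_j'$) with $0\le \varphi\le 1$ --- and turn it into a \emph{global} $\omega$-plurisubharmonic candidate on $X$ to feed into $\widetilde{\Capa}_{\omega,m}(K)$.

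The key mechanism is the observation emphasized in the introduction: in the Dinew--Kołodziej estimate
\[
\Vol(K_j) \le C_p\, \Capa_m(K_j, B_j')^{p}, \qquad p < \tfrac{n}{n-m},
\]
the extremal/candidate functions may be taken to be plurisubharmonic (in the ordinary sense), not merely $m$-sh. Indeed their construction of the relative extremal function, or the direct test functions used in the proof, can be chosen psh; the point is that a psh function on $B_j'$ is automatically $m$-sh, so the sup defining $\Capa_m$ over psh competitors still controls the volume. Now take such a psh candidate $\varphi$ on $B_j'$ with $0\le\varphi\le 1$. The standard gluing trick: choose $A\gg 1$ so that $A\rho_j + \varphi$ is psh on $B_j'$ and $\le$ a smooth psh/$\omega$-psh reference outside a neighborhood of $\bar B_j$ (here $\rho_j$ is a suitable cutoff-type local potential, or one uses $\max$ with a large negative constant near $\partial B_j'$), then extend by that reference to all of $X$; after rescaling by a uniform constant and adding a constant one obtains $\tilde\varphi \in \PSH(X,\omega)$ with $0\le \tilde\varphi\le 1$ which agrees (up to the uniform scaling factor $c$) with $\varphi$ on $B_j$. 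Since $\omega \ge \Lambda^{-1}\beta$ pointwise and $\omega_{\tilde\varphi}$ is $m$-positive, a standard comparison $\omega_{\tilde\varphi}^m\wedge\omega^{n-m} \ge \Lambda^{-2n}\,(dd^c\varphi)^m\wedge\beta^{n-m}$ on $B_j$ (after the uniform rescaling) gives
\[
\int_{K_j} (dd^c\varphi)^m\wedge\beta^{n-m} \;\le\; C\,\Lambda^{2n}\int_{K_j}\omega_{\tilde\varphi}^m\wedge\omega^{n-m}\;\le\; C'\,\widetilde{\Capa}_{\omega,m}(K).
\]
Taking the supremum over all such local psh candidates $\varphi$ yields $\Capa_m(K_j,B_j') \le C'\,\widetilde{\Capa}_{\omega,m}(K)$, and then the local inequality gives $V(K_j)\le C_\tau \widetilde{\Capa}_{\omega,m}(K)^\tau$. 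Summing over the finitely many $j$ and enlarging the constant finishes the proof.

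The main obstacle, and the step I would spend the most care on, is the gluing: producing a genuinely $\omega$-plurisubharmonic function on \emph{all} of $X$ that restricts (up to a uniform positive constant independent of $K$ and $\varphi$) to a given local psh candidate on $B_j$, while staying in $[0,1]$. This requires (i) uniform control of how negative the local potential of $\omega$ must be made --- i.e.\ a fixed constant $A$ depending only on $(X,\omega)$ and the cover, not on $\varphi$ --- so that $A\rho_j + c\varphi$ can be patched to an $\omega$-psh reference by a regularized maximum near $\partial B_j'$; and (ii) verifying that the uniform rescaling constant $c$ degrading $\varphi$ is harmless because $p<\tau$ leaves room, or simply absorbing it into $C_\tau$. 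The torsion of $\omega$ plays no role here since $\omega$-psh-ness is a pointwise positivity condition on $\omega + dd^c(\cdot)$; the subtlety is purely the standard-but-delicate psh gluing, together with checking that Dinew--Kołodziej's candidates really can be taken psh --- which is where I would cite their construction explicitly rather than reprove it.
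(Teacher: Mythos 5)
Your strategy --- localize to coordinate balls, invoke the Dinew--Ko{\l}odziej volume-capacity inequality there, and glue local psh candidates into global $\omega$-psh competitors --- is genuinely different from the paper's. The paper works globally: it solves $(\omega+dd^cu)^n=af_K\omega^n$ on $X$ with $f_K=V(K)^{-1/p}\mathbf{1}_K+1$ (existence by Ko{\l}odziej--Nguyen), controls $a$ and $\|u\|_{L^\infty}$ uniformly in $K$ via the domination principle and the $L^\infty$ estimate of Guedj--Lu, and then applies the mixed Monge--Amp\`ere inequality to get $\omega_u^m\wedge\omega^{n-m}\ge a^{m/n}f_K^{m/n}\omega^n$, so that a rescaling of $u$ is directly an $\omega$-psh competitor for $\widetilde{\Capa}_{\omega,m}(K)$. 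This route avoids both delicate points of your plan: no gluing is needed, and the observation that ``the candidates can be taken psh'' is implemented rather than quoted. Indeed, your first deferral --- that the local DK inequality survives when the capacity is restricted to psh competitors --- is true but is \emph{not} their stated theorem (restricting the class of competitors makes the capacity smaller, hence the inequality strictly stronger); it requires rerunning their proof, whose candidate is the psh solution of a local Monge--Amp\`ere Dirichlet problem. That rerun, transplanted to the compact Hermitian setting, is essentially the paper's entire proof.

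The more substantive problem is the gluing mechanism as you describe it. A ``cutoff-type local potential'' $\rho_j$ that is near $0$ on $B_j$ and very negative near $\partial B_j'$ cannot be plurisubharmonic (it would violate the maximum principle), so it is at best quasi-psh, $dd^c\rho_j\ge-c_j\omega$; then $\omega+dd^c(A\rho_j+\epsilon\varphi)\ge(1-Ac_j)\omega$, and taking $A\gg1$ makes the construction fail rather than succeed. The working mechanism is the opposite: fix once and for all a barrier with $dd^c\rho_j\ge-\tfrac12\omega$ (for instance $\rho_j=-\tfrac{1}{2\Lambda}|z-z_j|^2$), accept that its drop across the annulus $B_j'\setminus B_j$ is only a fixed small $\delta_j>0$, and scale the local candidate by a uniform $\epsilon<\delta_j$ so that $\epsilon\varphi+\rho_j$ can be matched to a constant near $\partial B_j'$ by a maximum. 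On $B_j$ this gives $\omega_{\tilde\varphi}\ge\epsilon\,dd^c\varphi\ge0$ and hence $\omega_{\tilde\varphi}^m\wedge\omega^{n-m}\ge\epsilon^m\Lambda^{-(n-m)}(dd^c\varphi)^m\wedge\beta^{n-m}$, which is what your capacity comparison needs, with constants depending only on the cover. With these two points repaired your argument closes and yields the same exponent range; it trades the global Hermitian Monge--Amp\`ere machinery for classical local pluripotential theory plus a gluing lemma, whereas the paper's proof is shorter precisely because it never leaves the compact manifold.
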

\begin{proof}
The argument presented in \cite[Proposition 3.6]{Ko_odziej_2016} is still valid here, as we use the estimates for $\omega$-plurisubharmonic solutions to the Monge--Amp\`ere equations. For the sake of completeness, we provide a slightly different proof below. 

We normalize the volume form such that $\int_X \omega^n = 1$. We can assume that $V(K)>0$, otherwise the inequality to be proved is trivial. Fixing $p>1$, we solve the complex Monge--Amp\`ere equation 
\begin{equation*} \tag{MA} \label{equation:MA}
(\omega+dd^c u)^n = a f_{K} \omega^n, \; \sup_X u=0, \text{ where }f_K=\left ( V(K)^{-1/p} {\bf 1}_K + 1 \right).
\end{equation*}
The existence of $a>0$ and $u\in \PSH(X,\omega)\cap L^{\infty}$ follows from \cite{KN15MA}. We next show how to control the constant $a>0$ uniformly. Observe first that $(\omega+dd^c u)^n \geq a \omega^n$, hence the domination principle,  \cite[Corollary 1.13]{GL21}, gives $a\leq 1$. 
We also have
\[
1=\|V(K)^{-1/p} {\bf 1}_K \|_p\le \|f_K \|_p \le \|V(K)^{-1/p} {\bf 1}_K \|_p + \|1\|_p \le 2.
\]
It thus follows from \cite[Theorem 2.1]{GL21} that 
\[
u \ge -C_1,
\]
where $C_1\geq 1$ is a uniform constant, independent of $K$.
We also deduce from Step 1 of \cite[Theorem 2.1]{GL21} that there exists a uniform constant $C_2>0$, independent on $K$, and a bounded $\omega$-psh function $\psi$, $-1\leq \psi\leq 0$, such that 
\[
(\omega + \ddc \psi)^n \ge C_2^{-1} f_K \omega^n.
\]
We conclude from the domination principle, see \cite[Corollary 1.13]{GL21}, that $C_2^{-1} \le a$.

On the other hand, the mixed Monge--Amp\`ere inequality \cite{NG16} yields that 
\[
\omega_u ^m \wedge \omega^{n-m} \ge a^{m/n}f_K ^{m/n} \omega^n.
\]
For $v=C_1^{-1}u$, we then have
\[
\widetilde{\Capa}_{\omega,m}(K) \ge   \int_K (\omega + \ddc v)^m \wedge \omega^{n-m} \ge a^{m/n} C_1^{-m}  V(K)^{1- \frac{m}{pn}} \ge {C_2}^{-m/n} C_1^{-m}  V(K)^{1- \frac{m}{pn}}.
\]
Hence, for all $p>1$, there exist $C_p$ such that 
\[
V(K) \le C_p \widetilde{\Capa}_{\omega,m}(K)^{\frac{pn}{pn-m}},
\]
which completes the proof.
\end{proof}
 
 \begin{lem} \label{3.1}
Let $\varphi$ be a bounded $\omega$-plurisubharmonic function satisfying $-1\le \varphi \le 0$, and $\psi$ be a bounded $(\omega,m)$-subharmonic function with respect to $\omega$, normalized by $\sup_X \psi=0$. Then, there exists a uniform constant $C >0$, depending on $\omega$, $n$, $m$, such that the following inequality holds:
\[
 \int_{X} \left| \psi  \right| \omega_{\varphi}^m \wedge \omega^{n-m} \le C.
\]
 \end{lem}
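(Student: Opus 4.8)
The plan is to integrate by parts to transfer the Laplacian-type term onto $\psi$ (or onto $\varphi$), then iterate a Cauchy--Schwarz/$\SH$-estimate argument to control the error terms coming from torsion. Write $\omega_\varphi^m\wedge\omega^{n-m} = (\omega+\ddc\varphi)\wedge\omega_\varphi^{m-1}\wedge\omega^{n-m}$, so that
\[
\int_X |\psi|\,\omega_\varphi^m\wedge\omega^{n-m}
= \int_X |\psi|\,\omega\wedge\omega_\varphi^{m-1}\wedge\omega^{n-m}
+ \int_X |\psi|\,\ddc\varphi\wedge\omega_\varphi^{m-1}\wedge\omega^{n-m}.
\]
Since $-1\le\varphi\le 0$, the first term is bounded by $\int_X |\psi|\,\omega_\varphi^{m-1}\wedge\omega^{n-m+1}$, which is one "level" down in the power of $\omega_\varphi$; I would set up the whole estimate as a downward induction on $m$, the base case $m=0$ being Lemma~\ref{lem:cap} (with constant $A$). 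For the second term, since $\psi$ need not be smooth one should first run the argument for $\psi$ replaced by a decreasing sequence of smooth $(\omega,m)$-sh functions, or invoke the integration-by-parts validity for bounded $(\omega,m)$-sh functions cited before Proposition~\ref{For:IPP}; applying Proposition~\ref{For:IPP} with $T=\omega_\varphi^{m-1}\wedge\omega^{n-m}$ gives
\[
\int_X |\psi|\,\ddc\varphi\wedge T
= \int_X \varphi\,\ddc|\psi|\wedge T
+ 2\int_X \varphi\,d|\psi|\wedge d^cT
+ \int_X \varphi|\psi|\,\ddc T.
\]
Because $\psi$ is $(\omega,m)$-sh we have $\ddc\psi\wedge T \ge -\omega\wedge T$ (in the relevant sense), and $|\varphi|\le 1$, so the first term on the right is dominated by $\int_X\omega_\psi\wedge T + \int_X\omega\wedge T$, again one level down in $\omega_\varphi$ (here using $\omega_\psi\wedge\omega_\varphi^{m-1}\wedge\omega^{n-m}\ge 0$, which is Gårding positivity since both forms are $m$-positive). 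The third term is bounded by $\int_X|\psi|\,\ddc T$, and expanding $\ddc T = \ddc(\omega_\varphi^{m-1}\wedge\omega^{n-m})$ using the Leibniz rule produces only terms with at most $m-1$ factors of $\omega_\varphi$ contracted against smooth forms — here Lemma~\ref{lem:inepc} is exactly what converts $|\omega_\varphi^k\wedge(\text{smooth})|$ into $C\,\omega_\varphi^k\wedge\omega^{n-k}$, and crucially $k\le m-1<m$ so the lemma applies.

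The remaining and genuinely delicate term is the gradient term $\int_X \varphi\,d|\psi|\wedge d^cT$. Expanding $d^cT$ by Leibniz gives a sum of terms of the shape $\int_X \varphi\,d|\psi|\wedge d^c\omega\wedge\omega_\varphi^{j}\wedge\omega^{m-2-j}\wedge\omega^{n-m}$ with $j\le m-1$. To these I would apply the Cauchy--Schwarz inequality of Lemma~\ref{lem:cs} in the form valid for $T'=\gamma^{m-1}\wedge\omega^{n-m-1}$ with $\gamma$ $m$-positive (the remark after Lemma~\ref{lem:cs}), taking $h$ to be a local potential so that $dh$ plays the role of $d|\psi|$ — more precisely one writes $d|\psi| = -d(-|\psi|)$ and uses that $-|\psi|=\psi$ is $(\omega,m)$-sh, so $d\psi\wedge d^c\psi\wedge(\cdots) \le (\omega_\psi+\omega)\wedge(\cdots)$ after another integration by parts (the standard trick $d\psi\wedge d^c\psi\wedge S = \tfrac12\ddc(\psi^2)\wedge S - \psi\ddc\psi\wedge S$). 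Cauchy--Schwarz then bounds the gradient term by
\[
\Big(\int_X |\varphi|^2\,\omega^2\wedge S\Big)^{1/2}\Big(\int_X d\psi\wedge d^c\psi\wedge\omega\wedge S\Big)^{1/2},
\]
and since $|\varphi|\le 1$ the first factor is controlled by a lower-level quantity, while the second factor, after the $d\psi\wedge d^c\psi$ trick and one more integration by parts, reduces to $\int_X|\psi|\,\omega_\psi\wedge(\text{smooth } m\text{-positive})\wedge\omega^{n-m}$ plus lower-order terms; bounding that mixed term again requires Lemma~\ref{lem:inepc} together with the induction hypothesis. One absorbs the resulting self-referential term using a small constant $\varepsilon$ (replace $C$ by $\varepsilon C^2 + \varepsilon^{-1}$ after Cauchy--Schwarz) so that it can be moved to the left-hand side.

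The main obstacle is precisely this bookkeeping of the gradient term: one must choose the auxiliary positive current $S$ at each stage to be of the form $\omega_\varphi^{j}\wedge(\text{smooth }m\text{-positive forms})$ so that both the Cauchy--Schwarz remark and Lemma~\ref{lem:inepc} apply, and one must check that every error term produced genuinely has strictly fewer than $m$ copies of $\omega_\varphi$ (or of $\omega_\psi$) paired with smooth torsion, so that Lemma~\ref{lem:inepc} is never invoked at the forbidden endpoint $k=m$, and that the induction on $m$ closes. I expect the cleanest organization is: (i) reduce to $\psi,\varphi$ smooth by approximation; (ii) prove by induction on $0\le j\le m$ a uniform bound for $\int_X|\psi|\,\omega_\varphi^{j}\wedge\omega^{n-j}$ and simultaneously for the auxiliary gradient integrals $\int_X d\psi\wedge d^c\psi\wedge\omega_\varphi^{j}\wedge\omega^{n-j-1}$; (iii) in the inductive step, peel off one $\omega_\varphi = \omega+\ddc\varphi$, integrate by parts, and handle the three resulting families of terms (pointwise/$\ddc T$ terms via Lemma~\ref{lem:inepc}, the $\ddc\psi$ term via Gårding positivity, the gradient term via Lemma~\ref{lem:cs} and an $\varepsilon$-absorption), all constants depending only on $n,m,\omega$.
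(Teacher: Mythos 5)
Your overall architecture (induction on the number of $\omega_\varphi$ factors, peeling off $\omega+\ddc\varphi$, integration by parts, Gårding positivity for the $\ddc\psi$ term, Cauchy--Schwarz for the torsion term) matches the paper's proof up to the one step that is actually delicate, and it is exactly there that your proposal breaks down.

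The gap is in the gradient term. You propose to apply Cauchy--Schwarz with $h=\psi$, producing a factor $\int_X d\psi\wedge d^c\psi\wedge\omega\wedge S$, and to control it by the identity $d\psi\wedge d^c\psi\wedge S=\tfrac12\ddc(\psi^2)\wedge S-\psi\,\ddc\psi\wedge S$. This cannot work uniformly in $\|\psi\|_{\infty}$: after Stokes the first piece becomes $\tfrac12\int_X\psi^2\,\ddc(\omega\wedge S)$, which grows like $\|\psi\|_\infty\int_X|\psi|$, and the second piece is $\int_X(-\psi)\,\omega_\psi\wedge\omega\wedge S$ up to lower order, a quantity carrying $\omega_\psi$ (not $\omega_\varphi$) that your induction does not reach and that is essentially as hard as the original statement. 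Your claimed pointwise-type bound $d\psi\wedge d^c\psi\wedge(\cdots)\le(\omega_\psi+\omega)\wedge(\cdots)$ is the analogue of the inequality $d\varphi\wedge d^c\varphi\le\omega+\tfrac12\ddc(\varphi+1)^2$, but that inequality uses \emph{both} $-1\le\varphi\le0$ and the positivity of the form $\omega_\varphi$ (it reads $(\varphi+1)\omega_\varphi-\varphi\omega\ge0$); for $\psi$ neither hypothesis holds — $\psi$ is unbounded below in the relevant uniformity and $\omega_\psi$ is only $m$-positive. The whole point of the lemma is that $C$ is independent of $\|\psi\|_\infty$, so any appearance of $\psi^2$ or of an untracked $(-\psi)\,\omega_\psi$-term is fatal. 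The paper avoids this by performing one more Stokes to move the derivative \emph{off} $\psi$ and \emph{onto} $\varphi$, i.e. $\int_X(-\varphi)\,d\psi\wedge d^cT=\int_X\psi\,d\varphi\wedge d^cT+\int_X\psi\varphi\,\ddc T$, then applying Cauchy--Schwarz with $h=\varphi$ and the weight $(-\psi)$ in both factors, so that the dangerous factor is $\int_X(-\psi)\,d\varphi\wedge d^c\varphi\wedge\omega\wedge S$; the inequality $d\varphi\wedge d^c\varphi\le\omega+\tfrac12\ddc(\varphi+1)^2$ (legitimate for the $\omega$-psh, normalized $\varphi$) converts this into $\int_X(-\psi)\,\omega_v^{k}\wedge\omega^{n-k}$ for new normalized $\omega$-psh functions $v$ built from $\varphi$ and $(\varphi+1)^2/2$. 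Finally the loop is closed not by an $\varepsilon$-absorption on a fixed $\varphi$ but by taking the supremum $S(\psi)$ over all normalized $\omega$-psh test functions and deriving $S(\psi)\le K_8+(K_9\,S(\psi))^{1/2}$. Your $\varepsilon$-absorption idea is in the right spirit, but it only closes if the self-referential term is the \emph{same} supremum, which your $d\psi\wedge d^c\psi$ route does not produce.

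Two smaller remarks. First, you do not need Lemma~\ref{lem:inepc} here at all: since the test function $\varphi$ is genuinely $\omega$-psh, $\omega_\varphi^{j}$ is a positive form and $\omega_\varphi^{j}\wedge(\text{smooth})$ is bounded pointwise by $C\,\omega_\varphi^{j}\wedge\omega^{n-j}$ for every $j$; this is precisely the simplification gained by defining the capacity with $\omega$-psh candidates, so your worry about the forbidden endpoint $k=m$ does not arise. Second, your reduction to smooth $\psi,\varphi$ and the Gårding positivity of $\omega_\psi\wedge\omega_\varphi^{m-1}\wedge\omega^{n-m}$ are both fine and agree with the paper.
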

We emphasize here that the constant $C$ does not depend on the $L^{\infty}$-norm of $\psi$.
\begin{proof}
By approximation, we can also assume that $\varphi$ and $\psi$ are smooth. We will prove the following bound
\[
    \int_{X} \left| \psi  \right| \omega_{\varphi}^k \wedge \omega^{n-k} \le C_k
\]
for all $k \in \{ 1, \cdots, m\}$, by induction on $k$. Here $C_k$ are uniform constants independent of $\psi$ and $\varphi$.
    We denote by $K_1, K_2, \cdots$ uniform positive constants. We fix $A$ such that
\[
\int_X |u| \omega^n \le A, \text{ for all } u \in \SH_m(X,\omega)\cap L^{\infty}(X), \; \sup_X u=0.
\]    
We remark that there exists $B \ge 0$, such that 
\[
    -B \omega^{2} \le \ddc \omega \le B \omega^2 \text{ and } -B \omega^{3} \le d\omega \wedge d^c \omega \le B \omega^{3}.
\]
Up to enlarging $B$, we can also assume that 
\begin{equation} \label{for:B} \tag{B}
-B \omega^{k+1} \le \ddc (\omega^{k}) \le B \omega^{k+1}, \; \forall k.
\end{equation}
 First we prove the bound for $k=1$. Using integration by parts (Proposition~\ref{For:IPP}), and the compactness result (Lemma~\ref{lem:cap}), we obtain
    \[
\begin{aligned}
\int_X|\psi| \omega_{\varphi} \wedge \omega^{n-1}= & \int_X(-\psi) \omega^n+\int_X(-\psi) d d^c \varphi \wedge \omega^{n-1} \\
\le & A+\int_X(-\varphi) d d^c \left(\psi \wedge \omega^{n-1}\right) \\
= & A +\int_X(-\varphi) d d^c \psi \wedge \omega^{n-1} \\
& +2 \int_X(-\varphi) d \psi \wedge d^c \omega^{n-1}+\int_X(-\varphi) \psi  d d^c \omega^{n-1}.
\end{aligned}
    \]
A basic computation yields:
\[
\int_X(-\varphi) d d^c \psi \wedge \omega^{n-1} = \int_X(-\varphi) \omega_{\psi} \wedge \omega^{n-1} + \int_X \varphi \omega^{n}.
\]
The assumption $-1 \le \varphi \le 0$ implies that $\int_X \varphi \omega^{n} \le 0$. Moreover, due to the positivity of $\omega_{\psi} \wedge \omega^{n-1}$, it follows that \[
\int_X(-\varphi) \omega_{\psi} \wedge \omega^{n-1} \le\int_X (\sup_X|\varphi|) \omega_{\psi} \wedge \omega^{n-1} \le\int_X  \omega_{\psi} \wedge \omega^{n-1}.
\]
Moreover,
\[
\begin{aligned}
\int_X  \omega_{\psi} \wedge \omega^{n-1} =& \int_X \omega^n + \int_X \ddc \psi \wedge \omega^{n-1}\\
=&  \int_X \omega^n + \int_X \psi \ddc  (\omega^{n-1}). 
\end{aligned}
\]
Together with \eqref{for:B} and the compactness result, we thus obtain that
\[
\int_X(-\varphi) d d^c \psi \wedge \omega^{n-1} \le V + B A.
\]
Furthermore, by the same computation as above,
\begin{equation} \label{Formula:comp+B}
\begin{aligned}
 \int_X(-\varphi) \psi \wedge d d^c \omega^{n-1}
 \le & B  \int_X (-\varphi) (-\psi) \omega^n
 \le B A.
 \end{aligned}
\end{equation}
We need to obtain an upper bound for $\int_X(-\varphi) d \psi \wedge d^c  \omega^{n-1}$. After an elementary computation, Stokes' theorem and \eqref{Formula:comp+B} yield
\[
\begin{aligned}
\int_X(-\varphi) d \psi \wedge d^c \omega^{n-1} 
&= \int_X \psi d \varphi \wedge d^c \omega^{n-1} + \int_X \psi \varphi d d^c \omega^{n-1} \\
& \le \int_X \psi d \varphi \wedge d^c \omega^{n-1} + BA.
\end{aligned}
\]
The Cauchy-Schwarz inequality (Lemma~\ref{lem:cs}) yields
\[
\begin{aligned}
-\int_X(- \psi) d \varphi \wedge d^c \omega^{n-1} &=- (n-1) \int_X (-\psi) d \varphi \wedge d^c \omega \wedge  \omega^{n-2}
\\
&\le K_1 \left( \int_X (-\psi) d \varphi \wedge d^c \varphi \wedge \omega^{n-1} \right)^{1/2} \left( \int_X (-\psi)\omega^n \right)^{1/2} \\
&\le K_1 A^{1/2} \left( \int_X (-\psi) d \varphi \wedge d^c \varphi \wedge \omega^{n-1} \right)^{1/2}.    
\end{aligned}
\]
We claim that
\begin{equation}  \label{dweddc}
  d \varphi \wedge d^c \varphi \leq \omega + \frac{1}{2} d d^c (\varphi +1)^2.  
\end{equation}
Indeed, $\omega + 1/2 d d^c (\varphi +1)^2 - d\varphi \wedge d^c \varphi  = \omega + (\varphi+1) d d^c \varphi = (\varphi+1) \omega_{\varphi} - \varphi \omega $ is positive because of the $\omega$-plurisubharmonicity of $\varphi$ and the assumption $-1\leq \varphi \leq 0$.

So we obtain that
\[
\int_X(-\varphi) d \psi \wedge d^c \omega^{n-1} \le BA + K_1A \left( \int_X (-\psi) (\omega + 1/2 d d^c (\varphi +1)^2 ) \wedge \omega^{n-1}\right)^{1/2}.
\]
To continue we denote 
$$
A(\psi) = \sup \left\{ \int_X (-\psi) \omega_u \wedge \omega^{n-1} : 0 \leq u \leq 1, u \in \PSH(X,\omega)\cap C^2(X) \right\}.
$$
From all the above computations we obtain 
$$
\int_X (-\psi) \omega_{\varphi}\wedge\omega^{n-1} \leq (A + V  +3BA ) + 2 K_1 A^{1/2} \left(A(\psi)\right)^{1/2}.
$$
Taking the supremum over all such $\varphi$ we arrive at 
\[
A(\psi) \le (A + V  +3BA )) + 2 K_1A^{1/2} \left(A(\psi)\right)^{1/2},
\]
which implies that $A(\psi)$ is uniformly bounded from above, independently of $\psi$. Thus, the bound holds for $k=1$. Suppose now that for all $j \in \mathbb{N}^{+}$, with $1\le 
j \le k-1$, and for all $\psi \in \SH_m(X,\omega), \varphi \in \PSH(X,\omega)$ with normalizing condition as above, the following inequality holds:
\[
\int_{X} \left| \psi  \right| \omega_{\varphi}^j \wedge \omega^{n-j} \le C_j.
\]
We need to infer the following inequality
\[
\int_{X} \left| \psi  \right| \omega_{\varphi}^{k} \wedge \omega^{n-k} \le C_{k}.
\]
Since
\[
\int_{X} \left| \psi  \right| \omega_{\varphi}^{k} \wedge \omega^{n-k} \\
= \int_{X} \left| \psi  \right| \omega \wedge \omega_{\varphi}^{k-1} \wedge \omega^{n-k} + \int_{X} \left| \psi  \right| (d d^c \varphi) \wedge \omega_{\varphi}^{k-1} \wedge \omega^{n-k},
\]
using induction hypothesis it is enough to estimate the second term. The integration by parts (Proposition~\ref{For:IPP}) yields
\begin{equation} \label{equ:kipp}
\begin{aligned}
\int_{X} \left(-\psi  \right) (d d^c \varphi) \wedge \omega_{\varphi}^{k-1} \wedge \omega^{n-k} 
=& \int_{X} \left(- \varphi  \right) d d^c (\psi \wedge \omega_{\varphi}^{k-1} \wedge \omega^{n-k} ) \\
=& \int_{X} \left(- \varphi  \right) d d^c \psi \wedge \omega_{\varphi}^{k-1} \wedge \omega^{n-k} + 2 \int_{X} \left(- \varphi  \right) d \psi \wedge d^c\left( \omega_{\varphi}^{k-1} \wedge \omega^{n-k} \right) \\
&+ \int_{X} \left(- \varphi  \right) \psi  d d^c \left( \omega_{\varphi}^{k-1} \wedge \omega^{n-k} \right) .
\end{aligned}
\end{equation}
We now deal with the first term of \eqref{equ:kipp}. Observe at first that
\[
\begin{aligned}
\int_{X} \left(- \varphi  \right) d d^c \psi \wedge \omega_{\varphi}^{k-1} \wedge \omega^{n-k} 
=& \int_{X} \left(- \varphi  \right) \omega_{\psi} \wedge \omega_{\varphi}^{k-1} \wedge \omega^{n-k} - \int_X \left(- \varphi  \right) \omega \wedge \omega_{\varphi}^{k-1} \wedge \omega^{n-k}\\
\le& \int_{X} \left(- \varphi  \right) \omega_\psi \wedge \omega_{\varphi}^{k-1} \wedge \omega^{n-k}\\
\leq & \int_X \omega_\psi \wedge \omega_{\varphi}^{k-1} \wedge \omega^{n-k}\\
=& \int_X \omega \wedge \omega_{\varphi}^{k-1} \wedge \omega^{n-k} + \int_X d d^c \psi \wedge (\omega_{\varphi}^{k-1} \wedge \omega^{n-k})  ,
\end{aligned}
\]
where the first inequality follows from the assumption that $\varphi \le 0$, and that $\omega_{\varphi}^{k-1} \wedge \omega^{n-k+1}$ is positive since $\varphi \in \PSH(X,\omega)$ while the second inequality holds because of the normalization of $\varphi$ and the positivity of $ \omega_{\varphi}^{k-1}\wedge \omega^{n-k}$. 
Note that
\[
 \int_X \omega \wedge \omega_{\varphi}^{k-1} \wedge \omega^{n-k} = \sum_{j=0}^{k-1} a_{k,j}\int_X \omega^{j+1 } \wedge (\ddc \varphi)^{k-1-j } \wedge \omega^{n-k} .
\]
For each term in the sum, integration by parts, along with the bound on $\varphi$, leads to
\[
\int_X \omega \wedge \omega_{\varphi}^{k-1} \wedge \omega^{n-k} \le K_2.
\]
Moreover, an elementary computation yields:
\begin{equation} \label{formula:elecom}
\begin{aligned}
\ddc \left(\omega_{\varphi}^{k-1} \wedge \omega^{n-k}\right)
=& (k-1)(k-2) \omega_{\varphi}^{k-3} \wedge d \omega \wedge d^c \omega \wedge \omega^{n-k} \\
&+ (k-1) \omega_{\varphi}^{k-2} \ddc \omega \wedge \omega^{n-k}
+ 2 (k-1) \omega_{\varphi}^{k-2} \wedge d \omega \wedge d^c \omega^{n-k}\\
&+ \omega_{\varphi}^{k-1} \wedge \ddc \omega^{n-k} .
\end{aligned}
\end{equation}
Thus, performing integration by parts, it follows from \eqref{for:B} and the induction hypothesis that
\[
\begin{aligned}
\int_{X}  d d^c \psi \wedge  \left(\omega_{\varphi}^{k-1} \wedge \omega^{n-k} \right)
= &\int_X \psi d d^c (\omega_{\varphi}^{k-1} \wedge \omega^{n-k}) \\
\le& K_3 \int_X (-\psi)(\omega_{\varphi}^{k-1} \wedge \omega^{n-k+1} + \omega_{\varphi}^{k-2} \wedge \omega^{n-k+2} + \omega_{\varphi}^{k-3} \wedge \omega^{n-k+3})\\
\le& K_3 (C_{k-1}+C_{k-2}+ C_{k-3}).
\end{aligned}
\]
We can also infer from \eqref{formula:elecom} and the induction hypothesis that the third term of \eqref{equ:kipp} is uniformly bounded:
\begin{equation} \label{formula:3rdt}
\begin{aligned}
\int_X (-\varphi)\psi \ddc \left( \omega_{\varphi}^{k-1} \wedge \omega^{n-k}\right) 
\le& K_4 \int_X(-\varphi) \psi(\omega_{\varphi}^{k-1} \wedge \omega^{n-k+1} + \omega_{\varphi}^{k-2} \wedge \omega^{n-k+2} + \omega_{\varphi}^{k-3} \wedge \omega^{n-k+3})\\
\le& K_4 \sup_{X}|\varphi| (C_{k-1}+ C_{k-2}+ C_{k-3}) \\
\le& K_5.
\end{aligned}
\end{equation}
We now control the second term of \eqref{equ:kipp}. Stokes' theorem yields
\[
\begin{aligned}
\int_{X} \left(- \varphi  \right) d \psi \wedge d^c \left( \omega_{\varphi}^{k-1} \wedge \omega^{n-k} \right)  
= \int_{X} \psi   d \varphi \wedge d^c\left( \omega_{\varphi}^{k-1} \wedge \omega^{n-k} \right)\\ + \int_{X} \psi  \varphi d d^c\left( \omega_{\varphi}^{k-1} \wedge \omega^{n-k} \right),
\end{aligned}
\]
where the second term is bounded by $K_5$, for the same reason as in \eqref{formula:3rdt}.
Using the Cauchy--Schwarz inequality (Lemma~\ref{lem:cs}) for the first term, we obtain that
\begin{equation}
\begin{aligned}
& -\int_{X} \left(- \psi  \right)   d \varphi \wedge d^c\left( \omega_{\varphi}^{k-1} \wedge \omega^{n-k} \right) \\
=&- \int_X \left(- \psi  \right)   d \varphi \wedge \left( (k-1) \omega_{\varphi}^{k-2} \wedge d^c \omega \wedge \omega^{n-k} + (n-k) \omega_{\varphi}^{k-1} \wedge d^c \omega \wedge \omega^{n-k-1} \right) \\
\le &  \left( K_6 \int_X (- \psi) d\varphi \wedge d^c \varphi \wedge \omega \wedge T \right)^{1/2} \left( \int_X (- \psi) \omega^2 \wedge T \right)^{1/2}, \label{1}
\end{aligned}    
\end{equation}
where $T= (k-1) \omega_{\varphi}^{k-2} \wedge \omega^{n-k} + (n-k) \omega_{\varphi}^{k-1} \wedge \omega^{n-k-1}$ is positive. Applying the induction hypothesis, $\int_X (- \psi) \omega^2 \wedge T$ can be controlled by $ ((k-1) C_{k-2}+ (n-k) C_{k-1})$.

By the induction hypothesis, and applying \eqref{dweddc} to \eqref{1}, we arrive at:
\[
\begin{aligned}
&\int_{X} \left(- \psi  \right)   d \varphi \wedge d^c\left( \omega_{\varphi}^{k-1} \wedge \omega^{n-k} \right) \\
\le& K_6^{1/2} \left((k-1) C_{k-1}+ (n-k) C_{k-1} \right)^{1/2} \left( \int_X (- \psi) \left( \omega + \frac{1}{2} d d^c (\varphi +1)^2 \right)\wedge \omega \wedge T \right)^{1/2} \\    
\le& K_7 \left( \int_X (- \psi) \left( \omega + \frac{1}{2} d d^c (\varphi +1)^2 \right)\wedge \omega \wedge T \right)^{1/2} .
\end{aligned}
\]
To simplify the notation, we write $T_1 = (k-1)\omega_{\varphi}^{k-2} \wedge \omega^{n-k+1} $, $T_2= (n-k) \omega_{\varphi}^{k-1} \wedge \omega^{n-k}$, and
\[
\begin{aligned}
A_1 \coloneqq   \int_X (- \psi ) \omega_u \wedge T_1 , A_2 \coloneqq  \int_X (- \psi ) \omega_u \wedge T_2 , u= \frac{(\varphi+1)^2}{2}-1 . 
\end{aligned}
\]
Set $v_1 \coloneqq \frac{u+(k-2)\varphi}{k-1}$, and $v_2 \coloneqq \frac{u +(k-1) \varphi}{k}$. We remark that $-1\le v_1, v_2 \le 0$, and that
\[
\begin{aligned}
\omega_u , \omega_{\varphi } \le& (k-1) \omega + \ddc \left(  u +(k-2)\varphi \right);\\
\omega_u , \omega_{\varphi } \le& k \omega + \ddc \left(  u +(k-1)\varphi \right).
\end{aligned}
\]
By induction hypothesis, we obtain that
\[
\begin{aligned}
 A_1 \le& (k-1)^{k-1} (k-1) \int_X (-\psi)\omega_{v_1}^{k-1} \wedge \omega^{n-k+1} \le (k-1)^{k-1} (k-1)C_{k-1};\\
 A_2 \le& k^{k} (n-k) \int_X (-\psi) \omega_{v_2}^{k} \wedge \omega^{n-k} .   
\end{aligned}
\]
Set $S(\psi)= \sup \left\{ \int_X (-\psi)\omega_v^{k} \wedge \omega^{n-k} : -1\le v \le 0, v\in \PSH(X,\omega)\right\}$. From the preceding computations, we conclude that
\[
\int_X (-\psi) \omega_{\varphi}^k \wedge \omega^{n-k} \le K_8 + \left( K_9 S(\psi)\right)^{1/2},
\]
where $K_8, K_9$ are independent on $\psi$.
Taking the supremum over all $-1 \le \varphi \le 0$, we obtain 
\[
S(\psi) \le K_8 + \left( K_9 S(\psi)\right)^{1/2},
\]
which leads to an uniform upper bound for $\int_X (-\psi)\omega_v^{k} \wedge \omega^{n-k}$ for $-1 \le v \le 0$. The proof is thereby complete.
\end{proof}

\begin{coro} \label{3.2}
There exists a constant $C > 0$ such that for all $\psi \in \SH_m(X,\omega)$ satisfying $\sup_X \psi = -1 $ and for every $t > 0$ we have
\[
\widetilde{\Capa}_{\omega,m}(\psi < -t) \le C/t.
\]
 \end{coro}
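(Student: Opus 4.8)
The plan is to deduce Corollary~\ref{3.2} from Lemma~\ref{3.1} by a Chebyshev-type inequality, after reconciling the two normalizations. Fix $t>0$ and let $\varphi\in\PSH(X,\omega)$ with $0\le\varphi\le1$ be an arbitrary competitor in the definition of $\widetilde{\Capa}_{\omega,m}(\{\psi<-t\})$; it suffices to bound $\int_{\{\psi<-t\}}\omega_\varphi^m\wedge\omega^{n-m}$ by a uniform constant times $1/t$ and then pass to the supremum. On $\{\psi<-t\}$ we have $-\psi>t>0$, hence $\mathbf{1}_{\{\psi<-t\}}\le t^{-1}(-\psi)$ on all of $X$; as $\varphi$ is $\omega$-plurisubharmonic the measure $\omega_\varphi^m\wedge\omega^{n-m}$ is nonnegative, so
\[
\int_{\{\psi<-t\}}\omega_\varphi^m\wedge\omega^{n-m}\ \le\ \frac1t\int_X(-\psi)\,\omega_\varphi^m\wedge\omega^{n-m}.
\]

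Next I would rewrite the right-hand side to fit the hypotheses of Lemma~\ref{3.1}. Using $\psi\le-1$ we have $-\psi=|\psi+1|+1$, so
\[
\int_X(-\psi)\,\omega_\varphi^m\wedge\omega^{n-m}=\int_X|\psi+1|\,\omega_\varphi^m\wedge\omega^{n-m}+\int_X\omega_\varphi^m\wedge\omega^{n-m}.
\]
Since $\omega_\varphi=\omega_{\varphi-1}$ with $-1\le\varphi-1\le0$, and $\psi+1\in\SH_m(X,\omega)$ with $\sup_X(\psi+1)=0$, Lemma~\ref{3.1} (applied to the pair $(\varphi-1,\psi+1)$) bounds the first term by a constant $C$ that is independent of both $\psi$ and $\varphi$. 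The second term is at most $\widetilde{\Capa}_{\omega,m}(X)$ by the very definition of the capacity, and this is finite --- either by \cite[Theorem 3.3]{kolodziej2023complexhessianmeasuresrespect}, or directly by expanding $\omega_\varphi^m$ and integrating by parts against the torsion bound~\eqref{for:B} exactly as in the $k=1$ step of Lemma~\ref{3.1}. Combining and taking the supremum over all such $\varphi$ gives $\widetilde{\Capa}_{\omega,m}(\{\psi<-t\})\le\bigl(C+\widetilde{\Capa}_{\omega,m}(X)\bigr)/t$.

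The one point needing care is that Corollary~\ref{3.2} does not assume $\psi$ bounded while Lemma~\ref{3.1} does; I would remove this by truncation. For $j\ge1$ set $\psi_j:=\max(\psi,-j)\in\SH_m(X,\omega)\cap L^\infty(X)$, which still has $\sup_X\psi_j=-1$ and satisfies $\{\psi_j<-t\}=\{\psi<-t\}$ as soon as $j>t$; running the argument above with $\psi_j$ in place of $\psi$ gives the same estimate with the same constant, uniformly in $j$. I do not expect any genuine obstacle: all the analytic work sits in Lemma~\ref{3.1}, and the crucial feature --- that its constant does not depend on $\psi$ or on $\varphi$, which is exactly why its proof was run with $\omega$-plurisubharmonic competitors --- makes the deduction here essentially one Chebyshev inequality together with the two harmless shifts $\varphi\mapsto\varphi-1$ and $\psi\mapsto\psi+1$.
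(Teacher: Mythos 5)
Your proof is correct and follows essentially the same route as the paper: a Chebyshev inequality $\mathbf{1}_{\{\psi<-t\}}\le t^{-1}(-\psi)$ combined with the uniform bound of Lemma~\ref{3.1}, then a supremum over competitors. You are in fact more careful than the paper's two-line argument, which silently glosses over the normalization mismatches ($\sup_X\psi=-1$ versus $\sup_X\psi=0$, and $0\le\varphi\le1$ versus $-1\le\varphi\le0$) and the reduction of unbounded $\psi$ to the bounded case; your shifts $\psi\mapsto\psi+1$, $\varphi\mapsto\varphi-1$, the extra $\widetilde{\Capa}_{\omega,m}(X)$ term, and the truncation $\psi_j=\max(\psi,-j)$ are exactly the right way to fill those in.
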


\begin{proof}
We fix $\varphi\in \PSH(X,\omega)$ satisfying $-1 \le \varphi \le 0$.

By lemma~\ref{3.1}, we deduce that
\[
\int_{(\psi < -t)} (-\psi) \omega^{m}_{\varphi} \wedge \omega^{n-m} \le C.
\]
We also observe that
\[
\int_{(\psi < -t)} (-\psi) \omega^{m}_{\varphi} \wedge \omega^{n-m}\ge \int_{(\psi < -t)} t \omega^{m}_{\varphi}\wedge \omega^{n-m},
\]
and this completes the proof.
\end{proof}

\begin{proof}[Proof of the main theorem]
    Without loss of generality, we may assume that $\sup_X \varphi =-1$. Fix $p < \frac{n}{n-m}$ and $q$ such that $p <q <\frac{n}{n-m}$. A fundamental calculation yields that 
    \[
    \begin{aligned}
        \int_X(-\varphi)^p \omega^n =&  \int_{0}^{+\infty}  \Vol(\{ (-\varphi)^p >t \})dt\\
        =& \int_{0}^{1}  \Vol(X)dt+\int_{1}^{+\infty}  \Vol(\{ (-\varphi)^p >t \})dt \\
        =& \int_X \omega^n +\int_{1}^{+\infty} p \Vol(\{ (-\varphi) >s \}) s^{p-1}ds .
    \end{aligned}
    \]
    It follows from the volume-capacity estimate (Proposition~\ref{prop:V-Cest}) and Corollary~\ref{3.2} that 
    \[
    \begin{aligned}
    \int_{1}^{+\infty} p \Vol(\{ (-\varphi) >s \}) s^{p-1}ds
        \le& C_q p\int_{1}^{+\infty} s ^{p-1} \left(\widetilde{\Capa}_{\omega,m}(\{ \varphi <-s \})\right)^qds \\
        \le & C_{p,q}\int_{1}^{+\infty} s^{p-1-q}ds < +\infty,
    \end{aligned}
    \]
where $C_{p,q}$ depend only on $p,q$.
\end{proof}

\bibliographystyle{smfalpha_new}
\bibliography{ref}

\end{document}